\numberwithin{equation}{section}
\title{\textbf{Teissier's problem on proportionality of nef and big classes over a compact K\"ahler manifold}}
\author{\textsc{Jixiang Fu and Jian Xiao}}
\date{}
\begin{document}
\maketitle

\theoremstyle{definition}
\newtheorem*{pf}{Proof}
\newtheorem{theorem}{Theorem}[section]
\newtheorem{remark}{Remark}[section]
\newtheorem{problem}{Problem}[section]
\newtheorem{conjecture}{Conjecture}[section]
\newtheorem{lemma}{Lemma}[section]
\newtheorem{corollary}{Corollary}[section]
\newtheorem{definition}{Definition}[section]
\newtheorem{proposition}{Proposition}[section]
\newtheorem{example}{Example}[section]

\begin{abstract}
We solve Teissier's proportionality problem for transcendental nef classes over a compact K\"ahler manifold which says that the equality in the Khovanskii-Teissier inequalities hold for two nef and big classes if and only if the two classes are proportional. This result recovers the previous one of Boucksom-Favre-Jonsson for the case of nef and big line bundles over a (complex) projective algebraic manifold.
\end{abstract}

\section{Introduction}
Around the year 1979, inspired by the Aleksandrov-Fenchel inequalities in convex geometry, Khovanskii and Teissier discovered independently deep inequalities in algebraic geometry which now is called Khovanskii-Teissier inequalities. These inequalities present a nice relationship between the theory of mixed volumes and algebraic geometry. Their proofs are based on the usual Hodge-Riemmann bilinear relations. A natural problem is how to characterize the equality case in these inequalities for two nef and big line bundles, which was considered by Teissier \cite{Tei82, Tei88}.

In their nice paper \cite{BFJ09}, Boucksom, Favre and Jonsson solved this problem and the answer is that the equality holds if and only if two line bundles are (numerically) proportional. In their paper, they proved an algebro-geometric version of the Diskant inequality in convex geometry following the same strategy of Diskant which is based on the differentiability of the volume function of convex bodies. To obtain their Diskant inequality, they develop an algebraic construction of the positive intersecton products of pseudo-effective classes and use them to prove that the volume function on the N\'{e}ron-Severi space of a projective variety is $\mathcal{C} ^1$-differentiable, expressing its differential as a positive intersection product. Note that their results hold on any complete algebraic variety over an algebraically closed field of characteristic zero. Later, Cutkosky \cite{Cut13} extended these remarkable results to a complete variety over an arbitrary field.

On the other hand, Dinh and Nguy\^en generalized the Hodge-Riemann bilinear relations (and some other results) to compact K\"ahler manifolds in the mixed situation. Using these relations, one can easily get the Khovanskii-Teissier inequalities for transcendental nef classes. So a natural question is how to characterize the equality case in this situation. In this note, we give the same answer of this question as in the algebro-geometric case.

In \cite{BFJ09} and \cite{Cut13}, a key ingredient, in the proof of the differentiability theorem of the volume of big line bundles over a projective variety, and thus in the proof  of the algebro-geometric version of the Diskant inequality, is the weak holomorphic Morse inequality
$$\textup{vol}(A-B)\geq A^n -nA^{n-1}B$$
for any nef line bundles $A$ and $B$. Hence, if one would like to use their methods to extend their results to transcendental classes, the main missing part
is the weak transcendentally holomorphic Morse inequality. However, up to now,
it is not fully proved yet (see \cite{ Xia13, Pop14}). In this note, without using the transcendental version of Diskant inequality, we can still solve Teissier's proportionality problem for transcendental classes. Thus, our result covers the previous one of Boucksom-Favre-Jonsson. Indeed, the key idea in the proof of our main result has been hidden in our previous work \cite{FX14}. For readers' convenience, we will present it in details.

\section{The main theorem}
Let us first recall the definition of nefness and bigness for $(1,1)$-classes on a compact K\"ahler manifold.
Assume $X$ is an $n$-dimensional compact K\"ahler manifold with a K\"ahler metric $\omega$. Let $\alpha\in H^{1,1}_{BC}(X, \mathbb{R})$ be a $(1,1)$ Bott-Chern class. Then $\alpha$ is called {\sl nef} if for any $\varepsilon>0$, there exists a smooth representation $\alpha_\varepsilon \in \alpha$ such that $\alpha_\varepsilon>-\varepsilon\omega$. This definition is equivalent to say that $\alpha$ belongs to the closure of the K\"ahler cone of $X$ which is denoted as $\overline{\mathcal{K}}$. And $\alpha$ is called {\sl big} if there exist a positive number $\delta$ and a positive current $T \in \alpha$ such that $T>\delta\omega$ (such a current $T$ is called a {\sl K\"ahler current}). This is equivalent to say that $\alpha$ belongs to the interior of pseudo-effective cone which is denoted as $\mathcal{E}^{\circ}$. For more notions, such as the movable cone $\overline{\mathcal{M}}$ in the following theorem, one can consult \cite{BDPP13}.

\begin{theorem}
 Assume $X$ is an $n$-dimensional compact K\"ahler manifold. Let $\alpha, \beta\in \overline{\mathcal{K}}\cap \mathcal{E}^{\circ}$ be two nef and big classes. Denote $s_k: = \alpha^k \cdot \beta^{n-k}$. Then the following statements are equivalent:
\begin{enumerate}[(1)]
\item $s_k ^2 =s_{k-1}\cdot s_{k+1}$ \ for $1\leq k\leq n-1$;
\item $s_k ^n =s_{0} ^{n-k}\cdot s_{n} ^{k}$\ for $0\leq k\leq n$;
\item $s_{n-1} ^n =s_{0} \cdot s_{n} ^{n-1}$;
\item  $\textup{vol}(\alpha+\beta)^{1/n} =\textup{vol}(\alpha)^{1/n}+\textup{vol}(\beta)^{1/n}$;
\item  $\alpha$ and $\beta$ are proportional;
\item $\alpha^{n-1}$ and $\beta^{n-1}$ are proportional.
\end{enumerate}
Moreover, all of the above statements are equivalent to
the $(n-1)$-th map
$\alpha\mapsto\alpha^{n-1}$ embedding
the nef big cone $\overline{\mathcal{K}}\cap \mathcal{E}^{\circ}$ into the movable cone $\overline{\mathcal{M}}$.
\end{theorem}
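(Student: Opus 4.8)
The plan is to read the final assertion as the conjunction of two facts, and then to observe that one of them is just a geometric repackaging of the equivalence $(5)\Leftrightarrow(6)$. Fact (i): the map $\Phi\colon \overline{\mathcal{K}}\cap\mathcal{E}^{\circ}\to H^{n-1,n-1}_{BC}(X,\mathbb{R})$, $\alpha\mapsto\alpha^{n-1}$, actually takes its values in the movable cone $\overline{\mathcal{M}}$, so it is a well-defined (continuous) map into $\overline{\mathcal{M}}$. Fact (ii): $\Phi$ is injective, i.e.\ it embeds $\overline{\mathcal{K}}\cap\mathcal{E}^{\circ}$ into $\overline{\mathcal{M}}$.

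For (i) I would invoke the description of $\overline{\mathcal{M}}$ recalled in \cite{BDPP13}: it is the closure of the convex cone generated by the classes $\mu_*(\tilde\omega_1\wedge\cdots\wedge\tilde\omega_{n-1})$, where $\mu\colon\tilde X\to X$ runs over modifications and the $\tilde\omega_i$ over K\"ahler classes on $\tilde X$. Taking $\mu=\mathrm{id}$ and all $\tilde\omega_i$ equal to a K\"ahler class $\omega'$ on $X$ shows $(\omega')^{n-1}\in\overline{\mathcal{M}}$; since a nef class $\alpha$ is a limit of K\"ahler classes and the $(n-1)$-fold cup product is continuous on the nef cone, $\alpha^{n-1}\in\overline{\mathcal{M}}$. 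Moreover, if $\alpha$ is big then $\alpha^{n}=\textup{vol}(\alpha)>0$, whence $\alpha^{n-1}\neq 0$; so $\Phi$ is a continuous map from the nef and big cone into $\overline{\mathcal{M}}\setminus\{0\}$.

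For (ii), suppose $\alpha,\beta\in\overline{\mathcal{K}}\cap\mathcal{E}^{\circ}$ satisfy $\alpha^{n-1}=\beta^{n-1}$. Then in particular $\alpha^{n-1}$ and $\beta^{n-1}$ are proportional, i.e.\ statement $(6)$ holds, so by the already established equivalence $(6)\Leftrightarrow(5)$ we have $\alpha=c\beta$ for some $c>0$; substituting, $\beta^{n-1}=\alpha^{n-1}=c^{\,n-1}\beta^{n-1}$ with $\beta^{n-1}\neq0$ forces $c^{\,n-1}=1$, hence $c=1$ and $\alpha=\beta$. This gives injectivity, and together with (i) proves that $\Phi$ embeds $\overline{\mathcal{K}}\cap\mathcal{E}^{\circ}$ into $\overline{\mathcal{M}}$. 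Conversely, injectivity of $\Phi$ recovers the implication $(6)\Rightarrow(5)$: if $\alpha^{n-1}=\lambda\beta^{n-1}$ with $\lambda=t^{\,n-1}$, $t>0$, then $(t^{-1}\alpha)^{n-1}=\beta^{n-1}$ with $t^{-1}\alpha$ still nef and big, so $t^{-1}\alpha=\beta$, i.e.\ $\alpha$ and $\beta$ are proportional. Hence ``$\Phi$ is an embedding'' is equivalent to ``$(6)\Rightarrow(5)$'', and since $(5)$ trivially implies each of $(1)$--$(4)$ and $(6)$ while the remaining implications are the content of the theorem, it is equivalent to the whole list.

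The only non-formal ingredient here is part (i)---that the $(n-1)$-st power of a nef class lies in $\overline{\mathcal{M}}$---and this is where I expect to lean hardest on the description of the movable cone from \cite{BDPP13}; but it is a known structural fact, not the real difficulty. The substance of the ``Moreover'' is therefore nothing more than the equivalence $(5)\Leftrightarrow(6)$ proved above, restated geometrically; the one point that genuinely needs care is the passage from proportionality of the classes to their equality, which the rescaling argument settles once we know $\beta^{n-1}\neq0$, i.e.\ once $\beta$ is big.
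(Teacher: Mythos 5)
Your proposal proves only the ``Moreover'' clause, and you do that correctly and in fact more carefully than the paper itself: you check that $\alpha^{n-1}$ lands in $\overline{\mathcal{M}}\setminus\{0\}$ via the push-forward description of the movable cone, and you use the rescaling $c^{n-1}=1$ to upgrade proportionality of the $(n-1)$-th powers to actual injectivity of the map, two points the paper disposes of in a single sentence. But the theorem is principally the equivalence of $(1)$--$(6)$, and your argument explicitly defers all of it (``by the already established equivalence $(6)\Leftrightarrow(5)$'', ``the remaining implications are the content of the theorem''). Since $(5)\Leftrightarrow(6)$ is itself part of the statement you were asked to prove, invoking it as known is circular, and the proposal does not stand as a proof of the theorem.

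The omitted piece is where the substance lies. The Khovanskii--Teissier inequalities for transcendental nef classes, via Dinh--Nguy\^en's mixed Hodge--Riemann relations, give the elementary chain $(1)\Leftrightarrow(2)\Leftrightarrow(3)\Leftrightarrow(4)$ as well as $(6)\Rightarrow(3)$, and $(5)\Rightarrow(3)$, $(5)\Rightarrow(6)$ are trivial. The one hard implication is $(3)\Rightarrow(5)$, which the paper calls the ``real difficulty'' and to which it devotes essentially the entire proof: one solves degenerate complex Monge--Amp\`ere equations in the big classes $\alpha$ and $\beta$ following Boucksom--Eyssidieux--Guedj--Zeriahi, by approximating through Yau solutions for $\alpha+t\omega$ and $\beta+t\omega$ and extracting $C^\infty_{\textup{loc}}$ limits $\alpha_0$, $\beta_0$ on the ample loci; one then uses the equality $s_{n-1}^n=s_0\, s_n^{n-1}$ together with a pointwise arithmetic--geometric mean inequality for the $(n-1)$-st exterior powers to force $\alpha_0=\beta_0$ on $\textup{Amp}(\alpha,\beta)$; and finally one shows the divisorial part of $\alpha_0-\beta_0$ vanishes using the support theorem for closed currents and the vanishing of Lelong numbers for minimal-singularity currents in nef and big classes. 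None of this appears in your proposal, so while your treatment of the ``Moreover'' sentence is sound, as an argument for the whole theorem it has a genuine gap.
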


\begin{proof}
For a projective algebraic manifold, the usual Khovanskii-Teissier inequalities imply
\begin{equation}\label{03}
s_k ^2 \geq s_{k-1}\cdot s_{k+1},\quad \textup{for}\ 1\leq k\leq n-1
\end{equation}
if $\alpha$ and $\beta$ are two nef divisors. We remark that it also holds if $\alpha$ and $\beta$ are two transcendental nef classes on a compact K\"ahler manifold (see \cite{DN06}). Its proof is a consequence of Ninh and Nguy\^en's result on mixed Hodge-Riemann bilinear relations for compact K\"ahler manifolds. For example, one can consult Proposition 6.2.1 in \cite{Cao13}. In fact, let $\omega_1$, $\cdots$, $\omega_{n-2}$ be $(n-2)$ K\"ahler classes of $X$. Consider the following quadratic $Q$ on $H^{1,1}_{BC}(X, \mathbb{R})$:
$$
Q(\lambda, \mu):= \int_X \lambda \wedge \mu \wedge \omega_1\wedge...\wedge \omega_{n-2}.
$$
According to \cite{DN06}, $Q$ is of signature $(1, h^{1,1})$. For any $\alpha$, $\beta\in \overline{\mathcal{K}}$ and $t\in \mathbb{R}$, consider $Q(\alpha+t\beta, \alpha+t\beta)$, $t\in\mathbb R$. As a function of $t$,  we claim that $Q(\alpha+t\beta, \alpha+t\beta)$ has at least a real solution. We only need to consider the case when $\alpha$ and $\beta$ are linearly independent and thus, $\alpha$ and  $\beta$ span a 2-dimensional subspace. In view of the signature of $Q$, it can not be positive on this 2-dimensional subspace. Now our claim  follows from this easily. The existence of real solution is equivalent to
$$
\Bigl(\int_X \alpha \wedge \beta \wedge \omega_1\wedge...\wedge \omega_{n-2}\Bigr)^2
\geq \Bigl( \int_X \alpha^2 \wedge \omega_1\wedge...\wedge \omega_{n-2}\Bigr)
\cdot \Bigl(\int_X \beta^2 \wedge \omega_1\wedge...\wedge \omega_{n-2}\Bigr)
.$$
Since $\omega_1, ..., \omega_{n-2}$ are arbitrary, taking appropriate $\omega_i$'s and then taking limits, we obtain the inequalities (\ref{03}) for any two transcendental nef classes.

\vskip5pt

We commence to prove the main result. It is easy to see the equivalences  of (1)-(4) (see e.g. \cite{Cut13}). Since $\alpha$, $\beta\in \overline{\mathcal{K}}\cap \mathcal{E}^{\circ}$, it is clear that $s_k >0$ for $0\leq k \leq n$. We first prove $(1)\Leftrightarrow (3)$. It is trivial that  (1) implies (3). On the other hand, the equalities in (\ref{03}) imply
\begin{equation}\label{02}
\frac{s_{n-1}}{s_0}=\frac{s_{n-1}}{s_{n-2}}\cdot\frac{s_{n-2}}{s_{n-3}}
\cdot...\cdot\frac{s_{1}}{s_{0}}\geq\frac{s_{n-1}}{s_{n-2}}\cdot\frac{s_{n-2}}{s_{n-3}}
\cdot...\cdot\bigl(\frac{s_{2}}{s_{1}}\bigr)^2\geq \cdots \geq\bigl(\frac{s_{n}}{s_{n-1}}\bigr)^{n-1}.
\end{equation}
Thus if (3) holds, then all inequalities in (\ref{02}) are equalities, and hence (1) holds. Now let us prove $(1)\Leftrightarrow (2)$. This also follows from the equalities (\ref{03}), since we will have
\begin{equation}\label{04}
(\frac{s_k}{s_{k-1}})^{n-k} \cdot...\cdot (\frac{s_1}{s_{0}})^{n-k}\geq (\frac{s_n}{s_{n-1}})^{k} \cdot...\cdot (\frac{s_{k+1}}{s_{k}})^{k},
\end{equation}
 which clearly implies $(1)\Leftrightarrow (2)$. Next we prove $(2)\Leftrightarrow (4)$. Inequality (\ref{04}) can be rewritten as
$$s_k ^n\geq s_0 ^{n-k}\cdot s_n ^{k},\quad \textup{for}\ \ 0\leq k\leq n\ .$$
 These inequalities yield
\begin{equation*}
\begin{aligned}
\textup{vol}(\alpha+\beta)=&(\alpha+\beta)^n =\sum \frac{n!}{k!(n-k)!}s_k \\
\geq&
\sum \frac{n!}{k!(n-k)!}s_0 ^{n-k/n}\cdot s_n ^{k/n}\\
=&\bigl(\textup{vol}(\alpha)^{1/n}+\textup{vol}(\beta)^{1/n}\bigr)^n.
\end{aligned}
\end{equation*}
This implies $(2)\Leftrightarrow (4)$.

\vskip5pt

The implication $(5)\Rightarrow (3)$ is trivial. Now the {\sl real} difficulty is to prove  $(3)\Rightarrow (5)$, but which can be finished following from the ideas in our previous work \cite{FX14}. Without loss of generality, assume $\textup{vol}(\alpha)=\textup{vol}(\beta)$.
If (3) holds, we will construct two equal positive $(1,1)$-currents in $\alpha$ and $\beta$ respectively. Hence this implies (5). To prove this, we first construct two positive $(1,1)$-currents in $\alpha$ and $\beta$ respectively which are equal on a Zariski open set. The construction heavily depends on the main theorem in \cite{BEGZ10} which solves Monge-Amp\`ere equations in big cohomology classes. Then, by the support theorem of currents,  the difference of these two currents can only be a combination of some prime divisors. By showing that all the coefficients in the combination vanish, we deduce that these two currents are equal.
 All is all, the key elements in the proof of $(3)\Rightarrow (5)$ are to solve Monge-Amp\`{e}re equations in nef and big cohomology classes and to use  some basic facts in pluripotential theory. In the following, we will carry out all the details.

We will use the same symbol $\alpha$ (resp. $\beta$) to denote a smooth representation in the cohomology class $\alpha$ (resp. $\beta$). Fix a K\"ahler metric $\omega$ and a smooth volume form $\Phi$ with $\int_X \Phi=1$. Since $\alpha$ and $\beta$ are nef and big, Theorem C of \cite{BEGZ10} implies that we can solve the following two degenerate complex Monge-Amp\`ere equations:
\begin{align}
\label{eq degen MA}
\bigl\langle(\alpha+i\partial\bar{\partial}\varphi)^{n}\bigr\rangle
&=c_{\alpha,0}\Phi,\\
\bigl\langle(\beta+i\partial\bar{\partial}\psi)^{n}\bigr\rangle
&=c_{\beta,0}\Phi,
\end{align}
where $\langle\cdot\rangle$
denotes the non-pluripolar products of positive currents, and $c_{\alpha,0}=\text{vol}(\alpha)=\textup{vol}(\beta)=c_{\beta,0}$. Moreover,
$\varphi$ (resp. $\psi$) has minimal singularities and is smooth on the ample locus
$\textup{Amp}(\alpha)$ (resp. $\textup{Amp}(\beta)$), which is a Zariski open set of $X$ depending
only on the cohomology class of $\alpha$ (resp. $\beta$). Let us first briefly recall how the solutions $\varphi$ and $\psi$ are obtained. Indeed, based on Yau's seminal work \cite{Yau78} on the Calabi conjecture, the above two degenerate complex Monge-Amp\`ere equations can be solved by approximation. By Yau's theorem, for $0<t<1$, we can solve the following two families of Monge-Amp\`{e}re equations:
\begin{align}
\label{eq family MA}
(\alpha+t\omega+i\partial\bar{\partial}\varphi_t)^n
& =c_{\alpha,t}\Phi,\\
(\beta+t\omega+i\partial\bar{\partial}\psi_t)^n
& =c_{\beta,t}\Phi,
\end{align}
where $c_{\alpha,t}= \int_X(\alpha+t\omega)^n$, $c_{\beta,t}= \int_X(\beta+t\omega)^n$ and $\sup_X\varphi_t=\sup_X\psi_t =0$.
Denote
$$\alpha_t =\alpha+t\omega+i\partial\bar{\partial}\varphi_t\quad \textup{and}\quad \beta_t =\beta+t\omega+i\partial\bar{\partial}\psi_t\ .$$
 We consider the limits of $\alpha_t$ and  $\beta_t $ as $t$ tends to zero. By basic properties of plurisubharmonic functions,
since $\sup_X\varphi_t=\sup_X \psi_t =0$, the
family of solutions $\{\varphi_t\}$ (resp. $\{\psi_t\}$) is compact in $L^1(X)$-topology. Thus there exists a convergent subsequence which we still denote it by the same symbol $\{\varphi_t\}$ (resp. $\{\psi_t\}$) and  there exists an $\alpha$-psh function $\varphi$ (resp. a $\beta$-psh function $\psi$) such that, when $t$ tends to zero, we have the following limits \emph{in the sense of currents on $X$}:
\begin{equation}\label{eq limit current 1}
\alpha_t\rightarrow
\alpha+i\partial\bar{\partial}\varphi,
\end{equation}
and
\begin{equation}\label{eq limit current2}
\beta_t\rightarrow
\beta+i\partial\bar{\partial}\psi.
\end{equation}
Moreover, by the theory developed in \cite{BEGZ10} and  basic estimates
in \cite{Yau78}, $\varphi_t$ (resp. $\psi_t$) is compact in
$C^{\infty}_{\textup{loc}}(\textup{Amp}(\alpha))$ (resp. $C^{\infty}_{\textup{loc}}(\textup{Amp}(\beta))$). Therefore there exist
convergent subsequences such that
the convergences (\ref{eq limit current 1}) and (\ref{eq limit current2})
 is \emph{in the topology of
$C^{\infty}_{\textup{loc}}(\textup{Amp}(\alpha))$ and $C^{\infty}_{\textup{loc}}(\textup{Amp}(\beta))$}.
Hence $\varphi$ (resp. $\psi$) is smooth on $\textup{Amp}(\alpha)$ (resp. $\textup{Amp}(\beta)$) respectively.
Moreover, since $\Phi$ is a
smooth volume form, $\alpha+i\partial\bar{\partial}\varphi$ (resp. $\beta+i\partial\bar{\partial}\psi$) must be a K\"ahler
metric on $\textup{Amp}(\alpha)$ (resp. $\textup{Amp}(\beta)$).

Denote the Zariski open set $\textup{Amp}(\alpha)\cap \textup{Amp}(\beta)$ by $\textup{Amp}(\alpha, \beta)$, and denote $\alpha+i\partial\bar{\partial}\varphi$ (resp. $\beta+i\partial\bar{\partial}\psi$) by $\alpha_0$ (resp. $\beta_0$). We first claim that $\alpha_0=\beta_0$ on $\textup{Amp}(\alpha, \beta)$.
Let
$c_{t}=c_{\alpha,t}/c_{\beta,t}$. By our assumption $\textup{vol}(\alpha)=\text{vol}(\beta)$, it is clear that
\begin{align}
\label{eq lim c_t}
\underset{t\rightarrow 0}{\textup{lim}}{c_t}=1\ .
\end{align}
Assume $\alpha^{n-1}=\beta^{n-1}+\Theta (\alpha,\beta)$ for some smooth $(n-1,n-1)$-form $\Theta (\alpha,\beta)$. Then
\begin{align}
\label{eq n-1 power}
\alpha_t ^{n-1}=\beta_t ^{n-1} +\Theta_t
\end{align}
for some smooth $(n-1,n-1)$-form $\Theta_t$. Pointwisely,  $\alpha_t$, $\beta_t$, $\alpha_t ^{n-1}$, $\beta_t ^{n-1}$ and $\Theta_t$ can be viewed as matrixes. In this sense, we have
\begin{align}
\label{eq n-1 power 1}
\frac{\textup{det}\ \alpha_t ^{n-1}}{\textup{det}\ \beta_t ^{n-1}}=
\Bigl(\frac{\textup{det}\ \alpha_t }{\textup{det}\ \beta_t}\Bigr)^{n-1}.
\end{align}
Hence, we have
\begin{equation}\label{01}
\begin{aligned}
c_t ^{n-1/n}&=\Bigl(\frac{\textup{det} \alpha_t ^{n-1}}{\textup{det}  \beta_t ^{n-1}}\Bigr)^{1/n}=
\Bigl(\frac{\textup{det} (\beta_t ^{n-1} +\Theta_t)}
{\textup{det} \beta_t ^{n-1}}\Bigr)^{n-1}\\
&\leq 1+ \frac{1}{n}\sum (\beta_t ^{n-1})^{i\bar j}(\Theta_t)_{i\bar j},
\end{aligned}
\end{equation}
where the matrix $\bigl((\beta_t ^{n-1})^{i\bar j}\bigr)_{n\times n}$ is the inverse of $\beta_t ^{n-1}$. Equivalently, multiplying both sides of (\ref{01}) by $\beta_t ^n$, we have
\begin{align}
\label{eq n-1 power ag1}
c_t ^{n-1/n}\beta_t ^n\leq \beta_t ^n+\beta_t \wedge \Theta_t.
\end{align}
Note that
$\beta_t \wedge \Theta_t=\alpha_t ^{n-1}\wedge \beta_t-\beta_t ^n$. Consider $\{\alpha_t ^{n-1}\wedge \beta_t\}$ (resp. $\{ \beta_t ^n\}$) as a family of positive measures, then it is of bounded mass. Thus there exist convergent subsequences, which we still denote as $\alpha_t^{n-1}\wedge \beta_t$ and $\beta_t^n$, and positive measures $\mu_1$ and $\mu_2$ such that
\begin{align}
\label{eq measure convergence}
&\alpha_t ^{n-1}\wedge \beta_t \rightarrow \mu_1,\\
&\beta_t ^n\rightarrow \mu_2
\end{align}
in the sense of measures. If denote $\mu=\mu_1-\mu_2$, then
$\beta_t \wedge \Theta_t\rightarrow \mu$.
We claim that $\mu$ is a zero measure. It is not hard to see from (\ref{eq lim c_t}) and
(\ref{eq n-1 power ag1}) that $\mu$ is a positive measure on
$X$: Let $f$ be any positive smooth function over $X$, we have
$$
\int_X f \mu =\underset{t\rightarrow 0}{\textup{lim}}\int_X f (\beta_t \wedge \Theta_t)
\geq \underset{t\rightarrow 0}{\liminf}\int_X f (c_t ^{\frac{n-1}{n}} \beta_t ^n -\beta_t ^n)
=0.
$$
Meanwhile, the assumption (3) implies
$$
    \int_{X}\mu
    =\lim_{t \to 0}\int_X
    (\beta_{t }\wedge\alpha_{t}^{n-1}-\beta_{t}^{n})
    =\int_X (\beta\wedge\alpha^{n-1}-\beta^{n})=0.
$$
Hence $\mu $ must vanish identically. In particular, since
$\textup{Amp}(\alpha, \beta)$ is a Zariski open set (thus a Borel measurable set),
we have
\begin{align}
\label{app lim measure}
  \beta_{t}\wedge\Theta_{t}\rightarrow 0
\end{align}
in the sense of measures on $\textup{Amp}(\alpha, \beta)$.
Using the convergence (\ref{eq limit current 1}) and (\ref{eq limit current2}) in the topology of
$C^{\infty}_{\textup{loc}}(\textup{Amp}(\alpha))$ and $C^{\infty}_{\textup{loc}}(\textup{Amp}(\beta))$, it is clear that there exists some smooth form $\Theta_{0}$ which is only defined on $\textup{Amp}(\alpha,\beta)$ such that $\Theta_{t}\to\Theta_0$ in the topology of $C^\infty_{\textup{loc}}(\textup{Amp}(\alpha,\beta))$. This implies in the same topology
\begin{align}
\label{smooth lim1}
   \beta_{t}\wedge\Theta_{t}\to\beta_{0}\wedge\Theta_{0}\ .
\end{align}
Combining (\ref{app lim measure}) and (\ref{smooth lim1}), and
using uniqueness of the limit, we obtain
\begin{align}\label{eq zero}
    \beta_{0}\wedge\Theta_{0}=0
\end{align}
on $\textup{Amp}(\alpha,\beta)$.
The above equality (\ref{eq zero}) implies that if we take the limits on $\textup{Amp}(\alpha,\beta)$ of both sides of (\ref{01}), we have
\begin{align}
 \label{arith-geom ineq zero}
   1&=\bigl(\frac{\det\alpha_{0}^{n-1}}{\det\beta_{0}^{n-1}}\bigr)^{\frac{1}{n}}=
   \bigl(\frac{\det(\beta_{0}^{n-1}+\Theta_{0})}{\det\beta_{0}^{n-1}}\bigr)^{\frac{1}{n}} \\
 &\leq 1+\frac{1}{n}
 \sum_{i,j}(\beta_{0}^{n-1})^{i\bar{j}}(\Theta_{0})_{i\bar{j}}\\
 &=1.
\end{align}
This forces $\Theta_0=0$, and hence
 $\alpha_0^{n-1}=\beta_0^{n-1}$ on $\textup{Amp}(\alpha,\beta)$. Since $\alpha_0$ and $\beta_0$ are K\"ahler metrics,
we have $\alpha_0=\beta_0$ on $\textup{Amp}(\alpha,\beta)$.
We claim $\alpha_0=\beta_0$ on {\sl all} $X$.
Before going on, we need the following two lemmas.

\begin{lemma}\label{current spt} (see \cite{Dem}, pp. 142-143)
Let $T$ be a $d$-closed $(p,p)$-current. Suppose $\textup{supp}\ T$ is  contained in an analytic subset $A$.
If $\dim A<n-p$, then $T=0$; if $T$ is of order zero and $A$ is of pure dimension $n-p$ with $(n-p)$-dimensional
irreducible components $A_{1},\cdots,A_{k}$, then $T=\sum c_{j}[A_{j}]$ with $c_{j}\in \mathbb{C}$.
\end{lemma}

\begin{lemma}\label{lelong num zero}
(see \cite{Bou04}, Proposition 3.2 and Proposition 3.6)
Let $\alpha$ be a nef and big class, and let $T_{\min}$ be a positive current in $\alpha$ with minimal singularities.
Then the Lelong number $\nu(T_{\min},x)=0$ for any point $x\in X$.
\end{lemma}

It is clear that $S:= X\setminus \textup{Amp}(\alpha, \beta)$ is a proper analytic subset of $X$. Let $T=\alpha_0-\beta_0$, then $T$ is a real $d$-closed (1,1)-current and $\textup{supp}\ T\subset S$.
If codim$S\geq 2$, then $T=0$ according to Lemma \ref{current spt}. This implies $\alpha_{0}=\beta_{0}$ on $X$; If
codim$S=1$ and $S$ has only irreducible components $D_{1},\cdots,D_{k}$ of pure dimension one,
then Lemma \ref{current spt} implies $\alpha_{0}-\beta_{0}=\sum c_{j}[D_{j}]$; If codim$S=1$ and $S$ has also components of codimension more than one, we just repeat the proof of Lemma \ref{current spt} in \cite{Dem} (for more details, see \cite{FX14}), and still get $\alpha_{0}-\beta_{0}=\sum c_{j}[D_{j}]$.
Since $\alpha_0$ and $\beta_0$ are real, all
$c_j$ can be chosen to be real numbers.
If there exists at least one $c_j>0$, we write this equality as
\begin{align}
\label{current eq}
    \alpha_{0}-\sum c_{j'}[D_{j'}]=\beta_{0}+\sum c_{j''}[D_{j''}]
\end{align}
with $c_{j'}\leq 0$ and $c_{j''}>0$.
Fix a $j''$ which we denote as $j_0''$. We take a generic point $x\in D_{j''_0}$, for example,
we can take such a point $x$ with $\nu([D_{j''_0}],x)=1$ and $x\notin \underset{j\neq j_0''}{\cup}D_{j}$.
Then taking the Lelong number at the point $x$ on  both sides of (\ref{current eq}), we have
\begin{equation*}\label{lelong eq}
    \nu(\alpha_{0},x)-\sum c_{j'}\nu([D_{j'}],x)=\nu(\beta_{0},x)+\sum c_{j''}\nu([D_{j''}],x).
\end{equation*}
Since $\alpha_{0}$ and $\beta_{0}$ are positive currents with minimal singularities
in nef and big classes, Lemma \ref{lelong num zero} tells us that $\nu(\alpha_{0},x)=0$ and $\nu(\beta_0,x)=0$.
The property of $x$ also implies $\nu([D_{j'}],x)=0$ and $\nu([D_{j''}],x)=0$ for all $j'$ and all $j''\not=j''_0$.
All these force $c_{j_0''}=0$,
which contradicts to our assumption  $c_{j_0''}>0$.  Thus we have
\begin{equation*}
    \alpha_{0}-\sum c_{j'}[D_{j'}]=\beta_{0}.
\end{equation*}
By the same reason, we can also prove $c_{j'}=0$. Hence we finish the proof of $\alpha_0=\beta_0$ over $X$ and of the implication $(3)\Rightarrow (5)$.

\vskip5pt

The implication $(5)\Rightarrow(6)$ is trivial, and it is clear $(5)\Rightarrow(3)$. For the implication of $(6)\Rightarrow (3)$, suppose $\alpha^{n-1}=c \beta^{n-1}$ for some $c>0$, then we have $\alpha^n =c \beta^{n-1}\cdot \alpha \geq c (\beta^n)^{n-1/n}(\alpha^n)^{1/n} $, and 
$\alpha^{n-1}\cdot \beta=c \beta^n \geq (\alpha^n)^{n-1/n}(\beta^n)^{1/n}$. 
This yields $(\alpha^n)^{n-1/n}=c (\beta^n)^{n-1/n}$, and as a consequence, we get $\alpha^{n-1}\cdot \beta=c \beta^n=(\alpha^n)^{n-1/n}(\beta^n)^{1/n}$ which is just $(3)$. Summarizing all the above arguments, we have finished the proof of the equivalences of $(1)-(6)$.

Moreover, the statement that the $(n-1)$-th exterior power map
$\alpha\mapsto\alpha^{n-1}$ embeds
the nef big cone $\overline{\mathcal{K}}\cap \mathcal{E}^{\circ}$ into the movable cone $\overline{\mathcal{M}}$ is equivalent to the implication $(6)\Rightarrow (5)$, and hence to the equivalence $(6)\Leftrightarrow(5)$. Therefore we finish the proof of our theorem.
\end{proof}

\noindent
\textbf{Acknowledgements}: Xiao would like to thank China Scholarship Council and Institut Fourier for support. He also would like to thank Professor Jean-Pierre Demailly for encouragement.
Fu is supported in part by NFSC 11121101.

\noindent
\textsc{Jixiang Fu}\\
\textsc{Institute of Mathematics, Fudan University, 200433 Shanghai, China} \\
\verb"Email: majxfu@fudan.edu.cn"\\

\noindent
\textsc{Jian Xiao}\\
\textsc{Institut Fourier, Universit\'{e} Joseph Fourier-Grenoble I, 38402 Saint-Martin d'H\`{e}res, France} \\
\textsc{and}\\
\textsc{Institute of Mathematics, Fudan University, 200433 Shanghai, China}\\
\verb"Email: jxiao10@fudan.edu.cn"\\

\end{document}